\newcommand{\F}{\mathbb{F}}
\DeclareMathOperator{\Tr}{Tr}
\DeclareMathOperator{\modulo}{mod}
\renewcommand{\mod}{\, \modulo \,}
\newtheorem{thm}{Theorem}
\newtheorem{lemma}[thm]{Lemma}
\theoremstyle{definition}
\newtheorem{rem}[thm]{Remark}
\newcommand{\red}[1]{{{\color{black}#1}}}
\newcommand{\brown}[1]{{{\color{black}#1}}}
\begin{document}
\title{Additive double character sums over some structured sets and applications}

\author{Cathy Swaenepoel$^{1,2}$ and Arne Winterhof$^3$}
\date{}

\maketitle

 
 \begin{center}
$^1$ Aix Marseille Universit\'e, CNRS, Centrale Marseille, I2M, Marseille, France\\
$^2$ Institut de Math\'ematiques de Marseille UMR 7373 CNRS, 163 Avenue de Luminy, Case 907,
13288 Marseille Cedex 9, France\\
E-mail: cathy.swaenepoel@univ-amu.fr\\
$^3$ Johann Radon Institute for Computational and Applied Mathematics,
Austrian Academy of Sciences, Altenberger Str. 69, 4040 Linz, Austria\\
E-mail: arne.winterhof@oeaw.ac.at
\end{center}
 
 \begin{abstract}
We study additive double character sums over two subsets of a finite field. We show that if there is a suitable rational self-map of small degree of a set $D$, then this set contains a large subset $U$ for which the standard bound on the absolute value of the character sum over $U$ and any subset~$C$ (which satisfies some restrictions on its size $|C|$) can be improved.
  Examples of such suitable self-maps are inversion and squaring.
  Then we apply this new bound to trace products and sum-product equations and improve results of the first author and of Gyarmati and S\'ark\"ozy.
 \end{abstract}

Keywords. finite fields, character sums, additive energy, trace products, sum-product equations\\

MSC 2010. 11T23, 11T30

 \section{Introduction}
 
  \brown{We denote by $\F_q$ the finite field with $q$ elements and by $p$ the characteristic of~$\F_q$. We write $\Tr$ for the absolute trace of~$\F_q$ and we use the notation
  $Y\ll Z$ and $Z\gg Y$ if $|Y|\le \lambda Z$ for some constant $\lambda >0$.}
  
  For a non-trivial additive character \brown{$\psi$} of $\F_q$ and any subsets
  $C,D\subseteq\F_q$ it is well-known that
  \begin{equation}\label{general}
      \left|\sum_{c\in C,d\in D} \psi(cd)\right|\le \min\left\{\left(|C||D|q\right)^{1/2},|C||D|\right\},
  \end{equation} 
  see for example \cite[Corollary 1]{gysa08}.
  
  This bound is tight in general, for example, if $q$ is a square, take $C=D=\F_{q^{1/2}}$ and $\psi$ be any non-trivial additive character of $\F_q$ which is trivial on the subfield $\F_{q^{1/2}}$. 
  
 We also provide an example when $q=p$ is a prime. Take 
 $$C=D=\left\{0,1,2,\ldots,\left\lfloor 0.1 p^{1/2}\right\rfloor\right\}.$$
 Then we have $0\le cd\le 0.01 p$ for any $(c,d)\in C\times D$ and thus for the additive canonical character
 $$\psi(x)=\exp\left(\frac{2 \pi i x}{p}\right)=\cos\left(\frac{2\pi x}{p}\right)+i\sin\left(\frac{2\pi x}{p}\right)$$
 of $\F_p$ we get
 $$\left|\sum_{c\in C,d\in D}\psi(cd)\right|\ge |C||D|\cos(0.02\pi)\ge 0.99 |C||D|$$
 which is of the same order of magnitude $p$ as $\sqrt{|C||D|p}$.
  
  Note that the bound $(\ref{general})$ is only non-trivial if $|C||D|>q$. However, if $C$ or $D$ is a structured set such as a multiplicative subgroup of $\F_q^*$ or an additive subgroup of~$\F_q$ we know better bounds, see for instance \cite{boglko,oswi,wi} and references therein.
  For example, if $q=p$ is a prime and $D$ is any subgroup of $\F_p^*$ of order of magnitude at least $p^\varepsilon$, the bound of \cite[Theorem 6]{boglko} on $\left|\sum\limits_{d\in D}\psi(cd)\right|$ is nontrivial for any $c\in \F_p^*$ which immediately gives a nontrivial bound on the absolute value of the double sum 
  $\sum\limits_{c\in C,d\in D}\psi(cd)$ for very small multiplicative subgroups $D$ of $\F_p^*$ and arbitrary
  $C\subseteq \F_p^*$.
  


  In this paper, 
   we show that if $D$ has some desirable structure, then there is a large subset $U$ of $D$ for which we obtain a better upper bound on $$\left|\sum_{c\in C,u\in U} \psi(cu)\right|$$ 
   than the classical bound \eqref{general}.
  The needed structure is the existence of a rational function $f(X)\in \F_q(X)$ with $f(D)\subseteq D$ of small degree which satisfies a certain property of nonlinearity: 
  \begin{equation}\label{cond_f}
      f(X) \notin \{a(g(X)^p-g(X))+bX+c: g(X)\in \F_q(X), a,b,c \in \F_q\}\brown{.}
  \end{equation}
  If $f\in \F_q(X)\setminus\{0\}$, then the degree of $f$ is defined as $\max\{\deg g, \deg h\}$ where $g$ and $h$ are coprime polynomials on $\F_q$ such that $h\neq 0$ and $f=g/h$.
  Examples for the choice of $f(X)$ are $f(X)=X^{-1}$ and $f(X)=X^2$ for odd $q$. 
  
  More precisely, we prove the following bound on double character sums. 
  \begin{thm}\label{charsumbound}
  Let $D\subseteq \F_q$ with $|D| \geq 2$ and assume that there exists $f(X) \in \F_q(X)$ of degree $k$ 
  satisfying \eqref{cond_f}
  such that 
  $f(X)$ has no pole in $D$ and $f(D)\subseteq D$.
Then there exists $U \subseteq D$ such that 
$$|U|\geq \frac{|D|}{k+1}$$ 
and 
    for any $C\subseteq \F_q$ and any non-trivial additive character $\psi$ of $\F_q$
we have
\begin{equation}\label{mainbound}
\left|\sum_{c\in C, u\in U} \psi(c u)\right| 
\ll \left( \frac{|C|^3 |D|^3 q}{M(|D|)} \right)^{1/4},
\end{equation}
where the implied constant depends only on $k$ and we used the abbreviation
\begin{equation}\label{M} M(|D|)=\min\left\{ \frac{q^{1/2}}{|D|^{1/2} (\log|D|)^{11/4}},\frac{|D|^{4/5}}{q^{2/5}(\log|D|)^{31/10}} \right\}.
\end{equation}
  \end{thm}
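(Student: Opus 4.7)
\medskip
\noindent\textit{Proof proposal.} The plan is to follow a Cauchy--Schwarz/additive-energy scheme, exploiting the hypothesis on $f$ both to extract $U$ from $D$ and to control the additive energy of $U$ via a Weil-type cancellation. Since $f$ is a rational function of degree~$k$, each value has at most $k$ preimages under $f$. Consider the undirected graph on $D$ in which $x$ and $f(x)$ are adjacent whenever $f(x)\neq x$: each vertex has at most one outgoing edge to $f(x)$ and at most $k$ incoming edges from its preimages, hence maximum degree at most $k+1$. A greedy selection (for instance via Brooks' theorem) then yields an independent set $U\subseteq D$ with $|U|\ge |D|/(k+1)$ on which $f$ is injective and $U\cap f(U)=\emptyset$; in particular, $V:=f(U)\subseteq D$ has size $|U|$ and is disjoint from $U$.

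Next, set $T=\sum_{c\in C,\,u\in U}\psi(cu)$ and $S(c)=\sum_{u\in U}\psi(cu)$. H\"older's inequality in $c$ gives
\begin{equation*}
|T|^{4} \;\le\; |C|^{3}\sum_{c\in C}|S(c)|^{4} \;\le\; |C|^{3}\sum_{c\in\F_q}|S(c)|^{4} \;=\; |C|^{3}\,q\,E(U),
\end{equation*}
where $E(U)=\#\{(u_1,u_2,u_3,u_4)\in U^{4}:u_1-u_2=u_3-u_4\}$ is the additive energy of $U$. Thus \eqref{mainbound} reduces to proving $E(U)\ll_k |D|^{3}/M(|D|)$.

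The crux, and the main obstacle, lies in this energy bound. Condition~\eqref{cond_f} is precisely the nonlinearity assumption needed for the Weil estimate
\begin{equation*}
\Bigl|\sum_{x\in\F_q\setminus P_f}\psi\bigl(\alpha f(x)+\beta x\bigr)\Bigr| \;\ll_k\; q^{1/2}, \qquad (\alpha,\beta)\in\F_q^{2},\ \alpha\neq 0,
\end{equation*}
where $P_f$ denotes the finite set of poles of $f$. Together with $U,V\subseteq D$ and the injectivity of $f$ on $U$, this yields strong Fourier cancellation for any linear form in $u$ and $f(u)$ with $u\in U$. The strategy is then to derive two complementary estimates, each efficient in a different regime of $|D|$ relative to $q$: a direct fourth-moment bound of the form $E(U)\ll_k |D|^{7/2}(\log|D|)^{11/4}/q^{1/2}$, obtained by combining the Weil input with a dyadic decomposition of the large spectrum of $1_U$ (sharp for small $|D|$), and a Pl\"unnecke--Ruzsa/Balog--Szemer\'edi--Gowers amplification $E(U)\ll_k |D|^{11/5}q^{2/5}(\log|D|)^{31/10}$ in the spirit of Bourgain--Garaev and Shkredov (sharp in the opposite regime); together these yield the uniform bound $E(U)\ll_k |D|^{3}/M(|D|)$. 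The delicate part, carrying most of the remaining work, is tracking the fractional exponents $4/5,\,2/5$ and the logarithmic exponents $11/4,\,31/10$ through the dyadic pigeonhole on the level sets of $|S|$ and the iteration of the Weil cancellation with the Pl\"unnecke-type amplification.
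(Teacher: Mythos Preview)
Your H\"older step reducing \eqref{mainbound} to the additive-energy bound $E(U)\ll_k |D|^3/M(|D|)$ is correct and is exactly what the paper does (their Lemma~\ref{lem_upbound_character_sum}). The genuine gap is in how you produce $U$. You first manufacture $U$ by a purely combinatorial device---independence in the functional graph of $f$---and only afterwards try to bound $E(U)$; but nothing about being an independent set in that graph constrains the additive structure of $U$. If $D$ itself has nearly maximal additive energy (for instance $D$ close to a coset of a large additive subgroup), then \emph{every} large subset of $D$, independent or not, inherits large energy, and the bound you need fails for your $U$. The Weil estimate you quote controls a complete sum over $\F_q$; it says nothing about sums restricted to your particular $U$, and the properties ``$f$ injective on $U$'' and ``$U\cap f(U)=\emptyset$'' give no handle on $\widehat{1_U}$. (Incidentally, independence in your graph does not force $f\lvert_U$ to be injective, since two distinct preimages of the same point are not adjacent to each other; and the greedy bound from maximum degree $k+1$ yields only $|U|\ge |D|/(k+2)$.)

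The paper proceeds in the opposite order. It invokes the Balog--Wooley--type decomposition theorem of Roche-Newton, Shparlinski and Winterhof \cite{roshwi} as a black box: under hypothesis \eqref{cond_f} one obtains a partition $D=S\cup T$ with $\max\{E(S),E(f(T))\}\ll_k |D|^3/M(|D|)$, and then $U$ is \emph{defined} to be whichever of $S$ or $f(T)$ has size at least $|D|/(k+1)$. The Weil input and the additive-combinatorics machinery you invoke (large-spectrum dyadic pigeonholing, BSG/Pl\"unnecke amplification) are indeed what drive the proof of that decomposition theorem, but they are applied to $D$ as a whole and the low-energy piece is the \emph{output}; they cannot be applied post hoc to an arbitrary large subset chosen for unrelated reasons. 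So the missing idea is that the choice of $U$ must come from the energy analysis, not precede it.
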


Note that there exists a constant $\lambda >0$ (depending only on $k$) such that $(\ref{mainbound})$ improves the classical bound $(\ref{general})$
if $|C|>0$ and 
$$M(|D|)> \lambda \max\left\{\frac{q}{|C||D|},\frac{|C||D|}{q}\right\},$$
which is satisfied if 
\begin{eqnarray*}&&\lambda \max\left\{ \frac{q^{1/2}(\log q)^{11/4}}{|D|^{1/2}}, \frac{q^{7/5}(\log q)^{31/10}}{|D|^{9/5}}\right\} < |C| \\&& <\lambda^{-1} \min\left\{ \frac{q^{3/2}}{|D|^{3/2}(\log q)^{11/4}},\frac{q^{3/5}}{|D|^{1/5}(\log q)^{31/10}} \right\}
\end{eqnarray*}
which is a nonempty interval for $|C|$ if 
$$(2\lambda^2)^{5/8} q^{1/2}(\log q)^{31/8} < |D| < (2\lambda^{2})^{-1}q(\log q)^{-11/2}$$
and $q$ is sufficiently large.



\begin{rem}
\label{rem1}

\brown{
The condition \eqref{cond_f} on $f(X)$ cannot be removed in Theorem \ref{charsumbound}.
Indeed, without this condition, we could take $f(X)=X$ and choose
\begin{equation*}
    C=\left\{0,1,2,\ldots,\left\lfloor 0.1 p^{1/2}\right\rfloor\right\},
    \quad 
    \quad
    D=\{x\in \F_q:\Tr(x)\in C 
    \}
\end{equation*}
and the additive character $\psi$ of $\F_q$ defined by
$$\psi(x)=\exp\left(\frac{2\pi i \Tr(x)}{p}\right).$$
Since $0 \leq c\Tr(d) \leq 0.01p$ for any $(c,d)\in C\times D$, we have for any $U \subseteq D$,
$$\left|\sum_{c\in C, u\in U} \psi(cu)\right|
\geq |C||U| \cos(0.02 \pi) \ge 0.99 |C||U|.
$$
Moreover, since $0.1 q/p^{1/2}\leq |D|=|C|q/p \leq q/p^{1/2}$, 
there exist absolute constants $\lambda_1,\lambda_2>0$ such that if 
$$\lambda_1 (\log q)^{11} < p < \lambda_2 q(\log q)^{-31/4},$$ then the upper bound in \eqref{mainbound} is smaller than $0.99|C||D|/2$,
which implies that~\eqref{mainbound} holds for no $U$ with $|U|\ge |D|/2$.}

\end{rem}

We prove Theorem~\ref{charsumbound} in Section~\ref{proof}.
Furthermore, we apply Theorem~\ref{charsumbound} to two problems:

  \begin{enumerate}
     \item For $C,D\subseteq \F_q$ find conditions such that
     \begin{equation}\label{trcd} \Tr(CD)=\{\Tr(cd) :c\in C,d\in D\}=\F_p\brown{.}
     \end{equation}
     The first author proved in \cite{sw18} that if $C,D\subseteq \F_q^*$, then
     \begin{equation}\label{cond_ini_trace}
         |C||D|>p^2q
     \end{equation}
     implies $(\ref{trcd})$.
     The condition \eqref{cond_ini_trace} is in general optimal up to an absolute constant factor (see \cite[Section 3.6]{sw18}). 
     
     In Section~\ref{trsec} we use Theorem~\ref{charsumbound} to relax this condition for many sets $C$ and~$D$.
     
    \item Gyarmati and S\'ark\"ozy used $(\ref{general})$ in \cite{gysa2} to show that for any 
    sets $A,B,C,D\subseteq \F_q$ with 
    \begin{equation}\label{q3}|A||B||C||D|>q^3
    \end{equation}
    there is a solution $(a,b,c,d)\in A\times B\times C\times D$
    of the sum-product equation
    $$a+b=cd.$$
    The condition \eqref{q3} is in general optimal up to an absolute constant factor.
    For instance if the characteristic is at least 3 and $q\equiv 1 \mod 4$, 
    \red{let $\{x_1,\ldots,x_{(q-1)/\brown{4}}\}\subseteq \F_q^*$ be any set with \brown{$(q-1)/4$ elements satisfying } $x_i\ne -x_j$, $1\le i< j\le (q-1)/\brown{4}$,} and take $A=\bigcup\limits_{i=1}^{(q-1)/4}\{x_i,-x_i\}$, $B=\F_q^*\setminus A$, $C=\F_q$ and $D=\{0\}$.
    
    In Section~\ref{sumprod} we relax $(\ref{q3})$ under the conditions of Theorem~\ref{charsumbound}.
\end{enumerate}

 \section{Proof of Theorem~\ref{charsumbound}}
 \label{proof}
 
 The proof of Theorem~\ref{charsumbound} is a combination of a bound on character sums in terms of additive energy and an existence result of a large subset of small additive energy.

 For $S\subseteq \F_q$, let $E(S)$ denote its additive energy, that is, 
 the number of solutions $(s_1,s_2,s_3,s_4)\in S^4$ of $s_1+s_2=s_3+s_4$.

Our first tool is an improvement of the bound $(\ref{general})$ on double additive character sums provided
that one of the summation sets is of small additive energy. 
\begin{lemma}\label{lem_upbound_character_sum}
Let $C$ and $U$ be subsets of $\F_q$ and $\psi$ be a non-trivial additive character of $\F_q$. Then
we have
$$
\left|\sum_{c\in C, u\in U} \psi(c u)\right| 
\leq \left( |C|^3 E(U) q \right)^{1/4}.
$$
\end{lemma}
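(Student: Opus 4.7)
The plan is a standard two-step Cauchy--Schwarz combined with orthogonality of additive characters, arranged so that the fourth moment of the inner character sum produces exactly the additive energy $E(U)$.

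First I would write $T_c := \sum_{u\in U} \psi(cu)$, so that the quantity of interest is $S := \sum_{c\in C} T_c$. Applying Cauchy--Schwarz on the outer sum over $c$ gives
\[
|S|^2 \;\le\; |C| \sum_{c\in C} |T_c|^2,
\]
and then squaring and applying Cauchy--Schwarz once more to the sum $\sum_{c\in C} |T_c|^2$ yields
\[
|S|^4 \;\le\; |C|^2 \Bigl(\sum_{c\in C} |T_c|^2\Bigr)^2 \;\le\; |C|^3 \sum_{c\in C} |T_c|^4.
\]

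Since $|T_c|^4 \ge 0$, I would next enlarge the domain of summation from $C$ to all of $\F_q$, giving
\[
|S|^4 \;\le\; |C|^3 \sum_{c\in \F_q} |T_c|^4.
\]
Now I would expand the fourth power using $|T_c|^2 = \sum_{u_1,u_2\in U} \psi(c(u_1-u_2))$, so that
\[
|T_c|^4 \;=\; \sum_{u_1,u_2,u_3,u_4\in U} \psi\bigl(c(u_1-u_2+u_3-u_4)\bigr).
\]
Swapping the order of summation and applying the orthogonality relation $\sum_{c\in \F_q} \psi(cy) = q\cdot \mathbf{1}_{y=0}$, only the quadruples with $u_1+u_3=u_2+u_4$ survive, and these are counted by $E(U)$. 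This yields
\[
\sum_{c\in \F_q} |T_c|^4 \;=\; q\, E(U),
\]
and combining with the previous display and taking fourth roots gives the claimed bound.

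There is no real obstacle here: the only care needed is the bookkeeping for the two Cauchy--Schwarz steps (to make sure the exponents of $|C|$ add up to $3$) and the sign arrangement in $|T_c|^2 = T_c \overline{T_c}$ so that the exponents in $\psi$ produce the equation $u_1+u_3=u_2+u_4$ that matches the paper's definition of additive energy.
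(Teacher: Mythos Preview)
Your proof is correct and follows essentially the same route as the paper: the paper applies H\"older's inequality with exponent $4$ in one step (after the triangle inequality) to reach $|S|^4 \le |C|^3 \sum_{c\in C}|T_c|^4$, whereas you obtain the identical inequality by two successive applications of Cauchy--Schwarz, and the remainder (extension to $\F_q$, expansion, orthogonality) is the same.
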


\begin{proof}
By H\"older's inequality we have
 \begin{eqnarray*} && \left|\sum_{c\in C,u\in U}\psi(c u)\right|^4\\
 &\le&\left(\sum_{c\in C}\left|\sum_{u\in U}\psi(c u)\right|\right)^4
 \le |C|^3\sum_{c\in \F_q} \left|\sum_{u\in U}\psi(c u)\right|^4\\
 &=&|C|^3\sum_{u_1,u_2,u_3,u_4\in U} \sum_{c\in \F_q}\psi(c(u_1+u_2-u_3-u_4))\\
 &=&|C|^3 E(U)q
 \end{eqnarray*}
 and the result follows.
\end{proof}
 
 Our second tool is the following consequence of a decomposition theorem of Roche-Newton, Shparlinski and the second author \cite[Theorem 1.1]{roshwi}. 
\begin{lemma}\label{lem_existence_U}
Let $D\subseteq \F_q$ with $|D| \geq 2$ and assume that there exists a rational function $f(X) \in \F_q(X)$ of degree $k$
satisfying \eqref{cond_f}
such that $f(X)$ has no pole in $D$ and $f(D)\subseteq D$.
Then there exists $U \subseteq D$ such that 
$$|U|\geq \frac{|D|}{k+1}$$ 
and 
\begin{equation}\label{upbound_energy_U}
E(U) \ll \frac{|D|^3}{M(|D|)},
\end{equation}
where the implied constant depends only on $k$ and $M(|D|)$ is defined by $(\ref{M})$.
\end{lemma}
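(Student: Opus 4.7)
The plan is to deduce this lemma as an essentially immediate corollary of the decomposition theorem of Roche-Newton, Shparlinski and the second author \cite{roshwi}. That result, under exactly the nonlinearity hypothesis \eqref{cond_f} on a rational function $f$ of degree $k$ together with the invariance condition $f(D)\subseteq D$, produces a partition of $D$ into a bounded (in terms of $k$) number of pieces, one of which is simultaneously large and has small additive energy. So the proof consists of quoting that theorem and unpacking the output.

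Concretely, I would first apply \cite[Theorem~1.1]{roshwi} to $D$ and $f$. The degree hypothesis yields a decomposition with at most $k+1$ parts (this is where the factor $k+1$ in the lemma comes from; it corresponds to counting preimage classes under $f$). By pigeonhole, at least one part, which we call $U$, satisfies $|U| \ge |D|/(k+1)$, giving the claimed size bound. Second, I would extract from the same theorem the quantitative upper bound on $E(U)$. The theorem in \cite{roshwi} is naturally phrased in two regimes --- roughly $|D| \le q^{1/2}$ and $|D| \ge q^{1/2}$ --- each producing its own estimate on the energy. Writing these side by side and keeping whichever is better at a given scale produces precisely the minimum appearing in the definition \eqref{M} of $M(|D|)$, so that $E(U) \ll |D|^3 / M(|D|)$ with an implied constant depending only on $k$.

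The role of condition \eqref{cond_f} in the argument of \cite{roshwi} is to rule out the only obstructions to nontrivial energy savings, namely maps of the form $f(X) = a(g(X)^p - g(X)) + bX + c$; such maps preserve additive structure (the Artin--Schreier part contributes trivially to additive characters and $bX+c$ is affine) and would allow $f(U)$ to have as much additive energy as $U$. Excluding them guarantees that iterating $f$ on $D$ cannot keep additive energy concentrated on too large a subset, which is exactly what powers the extraction of $U$.

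The main obstacle is not conceptual but bookkeeping: verifying that the two quantitative regimes in \cite[Theorem~1.1]{roshwi}, once optimized over their internal parameters, produce the precise polylogarithmic factors $(\log|D|)^{11/4}$ and $(\log|D|)^{31/10}$ displayed in \eqref{M}. This requires choosing the free parameters in the decomposition theorem appropriately and tracking the resulting powers of $\log|D|$ through the two separate branches of the estimate; the hypothesis $|D| \ge 2$ ensures that $\log|D|>0$ so that the bound is well defined.
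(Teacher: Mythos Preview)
Your identification of \cite[Theorem~1.1]{roshwi} as the sole ingredient is correct, but your description of what that theorem actually outputs is not, and this leads to a real gap. The decomposition theorem does \emph{not} produce a partition of $D$ into $k+1$ pieces each of small additive energy. It produces a partition into exactly \emph{two} disjoint pieces $D=S\cup T$ together with the bound
\[
\max\{E(S),\,E(f(T))\}\ \ll\ \frac{|D|^3}{M(|D|)}.
\]
Note that the second energy bound is on $E(f(T))$, not on $E(T)$. So a pigeonhole argument over ``$k+1$ parts all with small energy'' is not available: there are only two parts, and for one of them the energy control is only on its image under $f$.

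The factor $k+1$ arises instead from a dichotomy that you have not identified. If $|S|\ge |D|/(k+1)$, take $U=S$. Otherwise $|T|\ge k|D|/(k+1)$, and since $f$ has degree $k$ each value is attained at most $k$ times, whence $|f(T)|\ge |T|/k\ge |D|/(k+1)$; in this case take $U=f(T)$. Here the hypothesis $f(D)\subseteq D$ is used --- and this is its only role --- to guarantee $f(T)\subseteq D$, so that $U$ is indeed a subset of $D$. Your proposal treats $f(D)\subseteq D$ as a hypothesis of the cited theorem rather than as the glue that makes the second branch of the dichotomy land inside $D$; without this step you cannot conclude $U\subseteq D$.

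Finally, the quantity $M(|D|)$ with its specific log-exponents is already the output of \cite[Theorem~1.1]{roshwi}; there is no further parameter optimization to perform here, so your last paragraph anticipates work that is not needed.
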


\begin{proof}
By \cite[Theorem 1.1]{roshwi}, there exist disjoint sets $S$ and $T$ with $D=S\cup T$
and
\begin{equation}\label{energy} 
\max\{E(S),E(f(T))\}\ll \frac{|D|^3}{M(|D|)},
\end{equation}
where the implied constant depends only on $k$.
If $|S|\ge |D|/(k+1)$ then we take $U=S$. Otherwise, we have $|T|\ge |D|k/(k+1)$ and $|f(T)|\ge |D|/(k+1)$ since each value of $f(X)$ can be attained at most $k$ times. In this case, we take $U=f(T)$. 
In both cases, we have $U\subseteq D$ and by \eqref{energy}, $U$ satisfies \eqref{upbound_energy_U}.
\end{proof}


Now Theorem~\ref{charsumbound} follows by combining Lemmas \ref{lem_upbound_character_sum} and \ref{lem_existence_U}.



 \section{Trace products}
 \label{trsec}
 

\begin{thm}\label{thm_trace}
Let $C,D\subseteq \F_q$ with $|D| \geq 2$ and assume that there exists a rational function $f(X) \in \F_q(X)$ of degree $k$ satisfying \eqref{cond_f} such that $f(X)$ has no pole in $D$ and $f(D)\subseteq D$. There exists a constant $\lambda>0$ depending only on $k$ such that if   
\begin{equation}\label{lowerbound_trace}
|C|> \lambda \frac{p^4q}{|D|M(|D|)},
\end{equation}
then 
$$\Tr(CD)=\F_p,$$
where $M(|D|)$ is defined by $(\ref{M})$.
\end{thm}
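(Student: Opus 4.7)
The plan is to apply Theorem~\ref{charsumbound} to extract a good subset $U\subseteq D$ and then combine the resulting character sum bound with a standard Fourier-analytic counting argument over $\F_p$ to represent every $t\in\F_p$ as $\Tr(cu)$ with $(c,u)\in C\times U$. First I would invoke Theorem~\ref{charsumbound} to obtain $U\subseteq D$ with $|U|\geq |D|/(k+1)$ satisfying \eqref{mainbound} for every non-trivial additive character of $\F_q$. Since $\Tr(CU)\subseteq \Tr(CD)$, it is enough to show $\Tr(CU)=\F_p$.

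For a fixed $t\in\F_p$, let $N(t):=|\{(c,u)\in C\times U:\Tr(cu)=t\}|$. Orthogonality of the additive characters of $\F_p$ yields
$$N(t) = \frac{|C||U|}{p} + \frac{1}{p}\sum_{y\in\F_p^*}\exp\!\left(-\frac{2\pi i yt}{p}\right)\sum_{c\in C,\,u\in U}\psi_y(cu),$$
where $\psi_y(x):=\exp(2\pi i y\Tr(x)/p)$ is a non-trivial additive character of $\F_q$ for each $y\in\F_p^*$. Applying Theorem~\ref{charsumbound} to each of the $p-1$ inner sums and using $|U|\geq |D|/(k+1)$, the triangle inequality produces (for some constant $C_k$ depending only on $k$)
$$N(t) \geq \frac{|C||D|}{p(k+1)} - C_k\left(\frac{|C|^3|D|^3 q}{M(|D|)}\right)^{1/4},$$
the factor $p-1$ from summing over $y$ being absorbed by the prefactor $1/p$.

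Requiring the main term to dominate the error, that is $|C||D|/(p(k+1)) > C_k(|C|^3|D|^3 q/M(|D|))^{1/4}$, and raising to the fourth power yields exactly $|C| > C_k^4(k+1)^4 \cdot p^4 q/(|D|M(|D|))$, so taking $\lambda:=C_k^4(k+1)^4$ matches \eqref{lowerbound_trace} and forces $N(t)>0$ for every $t\in\F_p$. The only points of care are to pass from $D$ to $U$ at the outset so that Theorem~\ref{charsumbound} is applicable, and to observe that every non-trivial additive character of $\F_p$ lifts through $\Tr$ to a non-trivial additive character of $\F_q$; both are routine. I do not foresee a real obstacle, since all of the analytic difficulty has already been packaged into Theorem~\ref{charsumbound}.
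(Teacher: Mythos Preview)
Your proposal is correct and follows essentially the same argument as the paper: extract $U$ from Theorem~\ref{charsumbound}, count solutions of $\Tr(cu)=t$ via orthogonality of additive characters of $\F_p$, bound the non-trivial terms by \eqref{mainbound}, and compare with the main term $|C||U|/p\ge |C||D|/((k+1)p)$ to obtain the threshold $\lambda=C_k^4(k+1)^4$. The paper's proof is line-for-line the same, with $\kappa$ in place of your $C_k$.
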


\begin{proof}
Let $U$ be as in Theorem~\ref{charsumbound} and for $s \in \F_p$, let $N_s$ be the number of solutions $(c,u)\in C\times U$ of $\Tr(cu)=s$. 
It follows from \eqref{mainbound} that there exists a constant $\kappa>0$ depending only on $k$ such that for any $s\in\F_p$, we have
\begin{align*}
\left| N_s- \frac{|C||U|}{p}\right|
&= \left|
\frac{1}{p} \sum_{j=1}^{p-1} \exp\left( \frac{-2i\pi js}{p} \right) \sum_{c\in C, u \in U} \exp\left( \frac{2i\pi j\Tr(cu)}{p} \right)\right|\\
&\quad 
\leq \kappa \left(  \frac{ |C|^3 |D|^3 q}{M(|D|)}  \right)^{1/4}.
\end{align*}
Since $$\frac{|C||U|}{p} \geq \frac{|C||D|}{(k+1)p},$$ 
the condition 
$$  |C| >  \kappa^4 (k+1)^4  \frac{p^4 q}{|D| M(|D|)} $$
implies 
$N_s>0$ for any $s\in \F_p$, that is, $\Tr(CU)=\F_p$ and thus $\Tr(CD)=\F_p$.
Taking $\lambda = \kappa^4 (k+1)^4$, 
the result follows.
\end{proof}

\begin{rem}
For $C,D\subseteq \F_q^*$, \cite[Theorem 1.1]{sw18} implies $\Tr(CD)=\F_p$ if $(\ref{cond_ini_trace})$ is satisfied
and by \cite[Theorem 1.2]{sw18} we have
\begin{equation}\label{tr2} \F_p^*\subseteq \Tr(CD)\quad \mbox{if}\quad |C||D|\ge pq.
\end{equation}

If 
\begin{equation*}
\lambda^{5/4}  p^{5/2}q^{1/2}(\log q)^{31/8} < |D| <  \frac{q}{\lambda^2 p^4(\log q)^{11/2}}
\end{equation*}
(with $\lambda$ as in Theorem \ref{thm_trace}), then \eqref{lowerbound_trace} defines a larger range of $|C|$ with $\Tr(CD)=\F_p$ than $\eqref{cond_ini_trace}$.
\brown{In the case where the degree of $f$ is an absolute constant (so $\lambda$ is also an absolute constant), n}ote that this range for~$|D|$ is non-trivial if $q=p^r$ with $r\ge 14$ and $q$ is sufficiently large.


Similarly we get an improvement of $(\ref{tr2})$  if 
$$\lambda^{5/4} p^{15/4}q^{1/2}(\log q)^{31/8} < |D| < \frac{q}{\lambda^2 p^6(\log q)^{11/2}},$$
which, \brown{in the case where the degree of $f$ is an absolute constant}, is a nontrivial range for $|D|$ if $q=p^r$ with $r\ge 20$ and $q$ is sufficiently large.
\end{rem}

\begin{rem}
For any $s \in \F_p$, the number of $x\in \F_q$ such that $\Tr(x)=s$ is $q/p$. It follows that if $A$ is a subset of $\F_q$ with $|A|\leq q/p$ then $\Tr(A)$ may contain only 
one element of $\F_p$.

Theorem \ref{thm_trace} ensures that $\Tr(CD)$ covers $\F_p$ for many sets $C$ and $D$ such that~$|CD|$ 
is much smaller than $q/p$.
For instance, if $r\geq 20$ and $q$ is sufficiently large, then for any $D$ closed under inversion with 
$$|D|=\left\lceil q^{9/13} (\log q)^{7/26} \right\rceil$$ 
and any $C$ with 
$$|C|= \left\lceil p^4 q^{2/13}(\log q)^{69/26}\right\rceil$$
we have $\Tr(CD)=\F_p$ and 
$$|C||D| \ll q^{\alpha}(\log q)^{38/13}/p, \quad \mbox{where }\alpha = 3/r+11/13 <1.$$
\end{rem}

\begin{rem}
\brown{The following example shows that the condition \eqref{cond_f} on $f(X)$ cannot be removed in Theorem \ref{thm_trace}.}
Assume that $q=p^r$ with $p$ odd. Let $\{1,x,\ldots,x^{r-1}\}$ be a power basis of $\F_q$ over $\F_p$, let $\{w_0,\ldots,w_{r-1}\}$ be its dual basis, that is,
$$\Tr(w_ix^j)=\left\{\begin{array}{cc} 1, & i=j,\\ 0, & i\ne j,\end{array}\right. \quad 0\le i,j<r,$$
and denote by $\mathcal{Q}=\{z^2:z\in \F_p^*\}$ the set of
quadratic residues modulo $p$.
Assume that $r\equiv 0\mod 4$, take $f(X)=X$ and
 consider
 \begin{equation*}
    C = w_0\left(\mathcal{Q}+\F_p x + \cdots + \F_p x^{r/4}\right),
    \quad
    D = \mathcal{Q}+\F_p x + \cdots + \F_p x^{(3/4)r-1}.
\end{equation*}
Then, $|C|=\frac{p-1}{2} p^{r/4}$ and $|D|=\frac{p-1}{2} p^{(3/4)r-1}$. Hence
$$
\frac{|C||D|M(|D|)}{p^4q} \gg \frac{q^{1/8}}{p^3(\log q)^{31/10}}.
$$
\brown{It follows that for any constant $\lambda>0$, if $r\geq 28$ and $q$ is sufficiently large, then $C$ and $D$ satisfy \eqref{lowerbound_trace}.}
Moreover, 
\begin{eqnarray*}
\Tr(CD)&\subseteq& \Tr(w_0(\mathcal{Q}+\F_p x + \cdots + \F_p x^{r-1}))\\
&=&\Tr(w_0)\mathcal{Q}+\Tr(w_0x)\F_p+\cdots+\Tr(w_0x^{r-1})\F_p
= \mathcal{Q}
\varsubsetneq \F_p.
\end{eqnarray*}
\end{rem}

\section{The equation $a+b=cd$}\label{sumprod}

Now we prove an improvement of \cite[Corollary 1]{gysa2} (see also \cite{sa} for prime fields).

\begin{thm}\label{thm_eq}
Let $A,B,C,D\subseteq \F_q$ and denote by $N$ the number of solutions $(a,b,c,d)\in A\times B \times C \times D$ of the equation
\begin{equation*}
    a+b=cd.
\end{equation*}
Assume that $|D| \geq 2$ and that there exists a rational function $f(X) \in \F_q(X)$ of degree $k$ 
satisfying \eqref{cond_f}
such that $f(X)$ has no pole in $D$ and $f(D)\subseteq D$. Then there exists a constant $\lambda>0$ depending only on $k$ such that 
if
\begin{equation}\label{lowerbound_eq}
|A|^2|B|^2|C||D|M(|D|)
>\lambda q^5,
\end{equation}
then $N>0$,
where $M(|D|)$ is defined by $(\ref{M})$.
\end{thm}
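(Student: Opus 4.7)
My plan is to count solutions $(a,b,c,u)\in A\times B\times C\times U$ of $a+b=cu$ restricted to the good subset $U\subseteq D$ supplied by Theorem~\ref{charsumbound}, expand the indicator via additive characters of $\F_q$, and bound the resulting nontrivial character sums through \eqref{mainbound}. First, I would invoke Theorem~\ref{charsumbound} to produce $U\subseteq D$ with $|U|\ge |D|/(k+1)$ for which \eqref{mainbound} holds, and denote by $N_U$ the number of quadruples $(a,b,c,u)\in A\times B\times C\times U$ with $a+b=cu$; since $U\subseteq D$, we have $N_U\le N$, so it suffices to prove $N_U>0$.

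Using orthogonality of the additive characters of $\F_q$, I would write
$$
N_U = \frac{|A||B||C||U|}{q}+\frac{1}{q}\sum_{\psi\ne 1}\left(\sum_{a\in A}\psi(a)\right)\left(\sum_{b\in B}\psi(b)\right)\sum_{c\in C,\, u\in U}\psi(-cu).
$$
The main term is at least $|A||B||C||D|/((k+1)q)$ by the lower bound on $|U|$. For each nontrivial $\psi$, the map $x\mapsto \psi(-x)$ is again a nontrivial additive character, so Theorem~\ref{charsumbound} bounds the inner double sum by a constant (depending only on $k$) times $(|C|^3|D|^3 q/M(|D|))^{1/4}$, uniformly in $\psi\ne 1$. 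Combining this with Cauchy--Schwarz over $\psi\ne 1$ and Parseval's identity $\sum_\psi|\sum_{a\in A}\psi(a)|^2=q|A|$ (and similarly for $B$) controls the error term by $(|A||B|)^{1/2}$ times this uniform maximum, yielding
$$
\left|N_U-\frac{|A||B||C||U|}{q}\right|\ll (|A||B|)^{1/2}\left(\frac{|C|^3|D|^3 q}{M(|D|)}\right)^{1/4}
$$
with an implied constant depending only on $k$.

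Finally, $N_U>0$ follows once the error is strictly dominated by the main term, which after squaring twice and clearing denominators reduces exactly to $|A|^2|B|^2|C||D|M(|D|)\gg q^5$; this matches \eqref{lowerbound_eq} provided $\lambda$ is chosen large enough to absorb $(k+1)^4$ together with the fourth power of the implied constant coming from Theorem~\ref{charsumbound}. This argument parallels the proof of Theorem~\ref{thm_trace}, so I do not anticipate a genuine obstacle: the only care needed is the routine Cauchy--Schwarz/Parseval bookkeeping and the tracking of $k$-dependent constants through the exponent-$1/4$ bound.
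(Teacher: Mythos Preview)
Your proposal is correct and follows essentially the same argument as the paper: restrict to the subset $U$ from Theorem~\ref{charsumbound}, expand via additive characters, bound the inner double sum uniformly by \eqref{mainbound}, and apply Cauchy--Schwarz together with Parseval to the $A$- and $B$-sums to compare the error with the main term $|A||B||C||U|/q$. The paper's proof and yours differ only in notation (it writes $N^*$ for your $N_U$) and in the explicit tracking of the constant $\lambda=\kappa^4(k+1)^4$.
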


\begin{proof}

Let $U$ be as in Theorem~\ref{charsumbound} and let $N^*$ be the number of solutions $(a,b,c,u)\in A\times B \times C \times U$ of the equation $a+b=cu$.
It follows from \eqref{mainbound} and the Cauchy-Schwarz inequality that there is a constant $\kappa>0$ depending only on $k$ such that
\begin{eqnarray*}
\left|N^*-\frac{|A||B||C||U|}{q}\right|
&\leq&
\frac{1}{q}\sum_{\psi\neq \psi_0} \left|\sum_{a\in A} \psi(a)\right|\left|\sum_{b\in B} \psi(b)\right| \left|\sum_{c\in C,u\in U} \psi(cu)\right|
\\
&&\le
\frac{\kappa}{q} \left(\frac{|C|^3|D|^3q}{M(|D|)}\right)^{1/4}
\sum_{\psi\neq \psi_0} \left|\sum_{a\in A} \psi(a)\right|\left|\sum_{b\in B} \psi(b)\right|\\
&&\leq 
\frac{\kappa}{q} \left(\frac{|C|^3|D|^3q}{M(|D|)}\right)^{1/4} (q|A|)^{1/2} (q|B|)^{1/2}\\
&& \le \kappa\left(\frac{|A|^2|B|^2|C|^3|D|^3 q}{M(|D|)}\right)^{1/4}.
\end{eqnarray*}
Since $|U|\ge |D|/(k+1)$, 
$$|A|^2|B|^2|C||D|M(|D|) 
 >\kappa^4(k+1)^4 q^5$$ 
implies $N^*>0$ and thus $N>0$. Choosing $\lambda=\kappa^4(k+1)^4$ completes the proof.
\end{proof}

\begin{rem}
By \cite[Corollary 1]{gysa2},
we have $N>0$ if $(\ref{q3})$ is satisfied.

If 
\begin{equation*}
2\lambda\max\left\{  |D|^{1/2} q^{3/2} (\log q)^{11/4},\frac{q^{12/5}(\log q)^{31/10}}{|D|^{4/5}}\right\}
< |A||B| \leq q^2
\end{equation*}
(with $\lambda$ as in Theorem \ref{thm_eq}), which is a non-trivial range for $|A||B|$ if 
$$(4\lambda)^{5/4} q^{1/2} (\log q)^{31/8} < |D| 
< (4\lambda)^{-2}q(\log q)^{-11/2}$$ 
and $q$ is sufficiently large, then 
\eqref{lowerbound_eq} improves \eqref{q3}.
\end{rem}

\begin{rem}
\brown{The following two examples show that the condition \eqref{cond_f} on $f(X)$ cannot be removed in Theorem \ref{thm_eq}.}

\brown{For our first example, we a}ssume that $q=p^r$ with $p\equiv 1 \mod 4$ and $r\equiv 0\mod 4$. 
Let $\{1,x,\ldots,x^{r-1}\}$ be a power basis of $\F_q$ over $\F_p$.  Denote $\mathcal{Z}_1=\bigcup\limits_{i=1}^{(p-1)/4}\{\red{i,-i}\}$ and  $\mathcal{Z}_2=\F_p^*\setminus \mathcal{Z}_1$. 
Now take $f(X)=X$ and consider
\begin{align*}
     A &= \mathcal{Z}_1 + \F_p x + \ldots + \F_p x^{r-1}, 
    &B &= \mathcal{Z}_2 + \F_p x + \ldots + \F_p x^{r-1},\\
     C &= \F_p x + \cdots + \F_p x^{r/4},
    &D &= \F_p + \F_p x + \cdots + \F_p x^{(3/4)r-1}.
\end{align*}
Since $|A| = |B| \gg p^r$, $|C|= p^{r/4}$ and $|D|= p^{(3/4)r}$, we have
$$
\frac{|A|^2|B|^2|C||D|M(|D|)}{q^5} \gg \frac{q^{1/8}}{(\log q)^{31/10}}
$$
and it follows that for any constant $\lambda>0$, 
\brown{if $q$ is sufficiently large, then $|A|,|B|,|C|$ and $|D|$ satisfy \eqref{lowerbound_eq}.}
Moreover, 
$$A+B \subseteq \F_p^* + \F_p x + \ldots + \F_p x^{r-1}
\quad \mbox{and} \quad
CD \subseteq \F_p x + \ldots + \F_p x^{r-1}$$
which implies $N=0$.

\brown{For our second example, we take $f(X)=X$ and we use $C$ and $D$ from Remark~\ref{rem1}.}
Then 
$$\Tr(CD)\subseteq [0,0.01p]$$
and we take 
$$A=B=\{x\in \F_q : \Tr(x)\in (0.005p,p/2)\}.$$
Since 
$$\Tr(A+B)\cap \Tr(CD)=\emptyset$$
there is no solution of $a+b=cd$ with $(a,b,c,d)\in A\times B\times C\times D$.
However, $|A|^2|B|^2|C||D|$ is of order of magnitude $q^5$ and for any $\lambda>0$, there exist $\lambda_1,\lambda_2>0$ such that if 
$$\lambda_1 (\log q)^{11} < p < \lambda_2 q(\log q)^{-31/4},$$ then $\eqref{lowerbound_eq}$ is satisfied.
\end{rem}


 \section*{Acknowledgment\red{s}}
 The second author was partially funded by the Austrian Science Fund (FWF) project P 30405-N32.\\
\red{We wish to thank the anonymous referee for very useful comments.}

\end{document}